\newcounter{oftheorem}[subsection]
\newenvironment{mytheorem}[1]%
{\begin{trivlist}
     
     \refstepcounter{oftheorem}
     \item[\hspace{\labelsep}\bf\thesection.\arabic{oftheorem} #1]}%
{\end{trivlist}}
\newenvironment{definition}{\begin{mytheorem}{Definition}\it}{\end{mytheorem}}
\newenvironment{proposition}{\begin{mytheorem}{Proposition}\it}{\end{mytheorem}}
\newenvironment{theorem}{\begin{mytheorem}{Theorem}\it}{\end{mytheorem}}
\newenvironment{remark}{\begin{mytheorem}{Remark}}{\end{mytheorem}}
\newenvironment{lemma}{\begin{mytheorem}{Lemma}}{\end{mytheorem}}
\author{Kostas Katsios, Stavros Anastassiou\\
Center of Research and Applications of Nonlinear Systems,\\ (CRANS),\\
 University of Patras, Department of Mathematics,\\ GR-26500 Rion, Greece}
\title{Darboux polynomials and global phase portraits for the $D_2$ vector field}
\begin{document}
\maketitle
\begin{abstract}
We study a vector field of $\mathbb{R}^3$ equivariant under the $D_2$ symmetry group, called ``the $D_2$ field'' in the literature. We construct the complete list of Darboux polynomials for it, solving the partial differential equation defining them. We also use these polynomials to comment on its global qualitative behaviour. This is meant to be a first step towards the comparison of vector fields based on the module generated by their Darboux polynomials.   
\end{abstract}
\textbf{Keywords:} Darboux polynomials, $D_2$ vector field, global phase portraits\\
\textbf{MSC2010:} 34C14, 34C45
\section{Introduction}
Vector fields possessing some kind of symmetry occur very often in applications and attract therefore the attention of many researchers. For an introduction to the problems and methods of dynamical systems 
invariant under a symmetry group one may consult \cite{field1,field2,chossat}, while in \cite{lett} an exposition is given of the 
topological properties of three--dimensional flows invariant under 
various symmetry groups.

Let us recall that, if $(G,*)$ is a compact Lie group, a polynomial $p\in \mathbb{R}[x_1,x_2,..,x_n]$ is called invariant with respect to this group if $p(gx)=p(x), \forall g\in G$. The set of all invariant polynomials is called the invariant ring of $G$ and it is generated by finitely 
many homogeneous invariant polynomials.

A polynomial mapping $f:\mathbb{R}^n\rightarrow \mathbb{R}^n$ is called equivariant if $f(gx)=gf(x),\ \forall g\in G$. Their set forms a module 
over the invariant ring. 

In what follows we do not distinguish between a vector field $f:\mathbb{R}^3\rightarrow \mathbb{R}^3$ 
and the associated system of equations $\dot{x}=f(x),x\in \mathbb{R}^3$. We shall 
call symmetric the system of equations associated with an equivariant vector field.

Here we consider the group generated by the following transformations of $\mathbb{R}^3$:
\begin{center}
$(x,y,z)\mapsto (x,y,z)$, $(x,y,z)\mapsto (-x,-y,z)$,\\
$(x,y,z)\mapsto (-x,y,-z)$, $(x,y,z)\mapsto (x,-y,-z)$.
\end{center}

The following was proved in \cite{D2}.
\begin{lemma}
A three--dimensional autonomous system, symmetric with respect to the above group and having linear and quadratic terms, can be written in the form:
\begin{eqnarray} \label{d2}
\dot{x}&=&ax+yz \nonumber \\
\dot{y}&=&by+xz \\
\dot{z}&=&z\pm xy \nonumber
\end{eqnarray}
where $a,b$ are free parameters.
\end{lemma}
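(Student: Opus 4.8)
The plan is to turn the equivariance requirement into a parity condition on the exponents of monomials, read off the admissible linear and quadratic terms component by component, and then absorb the surviving coefficients by rescaling coordinates and time.

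First I would represent the group by the four diagonal matrices $g_0=\mathrm{diag}(1,1,1)$, $g_1=\mathrm{diag}(-1,-1,1)$, $g_2=\mathrm{diag}(-1,1,-1)$, $g_3=\mathrm{diag}(1,-1,-1)$, which form a Klein four-group $\mathbb{Z}_2\times\mathbb{Z}_2$ generated by $g_1$ and $g_2$. Since every $g\in G$ is diagonal, writing $g=\mathrm{diag}(\epsilon_1,\epsilon_2,\epsilon_3)$ the equivariance condition $f(gx)=gf(x)$ decouples into the three scalar conditions $f_i(\epsilon_1 x,\epsilon_2 y,\epsilon_3 z)=\epsilon_i f_i(x,y,z)$. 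A monomial $x^{p}y^{q}z^{r}$ is multiplied by $\epsilon_1^{p}\epsilon_2^{q}\epsilon_3^{r}$ under $g$, so each scalar condition becomes a parity constraint on $(p,q,r)$; moreover it suffices to impose the two constraints coming from the generators $g_1,g_2$, the one from $g_3=g_1g_2$ being automatic.

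Next I would carry out this bookkeeping. Testing $g_1$ and $g_2$ shows that a monomial may appear in $f_1$ iff $p+q$ and $p+r$ are both odd, in $f_2$ iff $p+q$ is odd and $p+r$ even, and in $f_3$ iff $p+q$ is even and $p+r$ odd. Restricting to degrees one and two, the only survivors are $x$ and $yz$ for $f_1$, $y$ and $xz$ for $f_2$, and $z$ and $xy$ for $f_3$. Hence every admissible field has the form $\dot x=a_1 x+c_1 yz$, $\dot y=b_1 y+c_2 xz$, $\dot z=e_1 z+c_3 xy$; in particular the linear part is forced to be diagonal, so no linear cross terms can occur.

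It remains to normalize the six coefficients. Substituting $x=\alpha u$, $y=\beta v$, $z=\gamma w$ and $t=\tau s$ multiplies $c_1,c_2,c_3$ by $\beta\gamma\tau/\alpha$, $\alpha\gamma\tau/\beta$, $\alpha\beta\tau/\gamma$ respectively and $e_1$ by $\tau$, while $a_1,b_1$ simply acquire the factor $\tau$. Choosing $\tau=1/e_1$ fixes the linear $z$-coefficient to $1$, and the three spatial scalings can then be used to send the quadratic coefficients to prescribed values, leaving $a=a_1\tau$ and $b=b_1\tau$ as the two free parameters. The hard part is the sign bookkeeping of this last step: since $\alpha^2,\beta^2,\gamma^2$ must be positive, the signs of the products $c_ic_j$ are invariant under the rescaling, and one computes that the three quadratic coefficients can be brought to $1,1,\pm1$ but not in general to $1,1,1$. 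This residual sign is exactly the $\pm$ in the $z$-equation; I expect that after possibly permuting the coordinates (a symmetry of $G$, since permutations preserve the set $\{g_0,g_1,g_2,g_3\}$ and only relabel which axis plays the role of $z$) and using the nondegeneracy hypothesis that the quadratic terms are genuinely present, one may always arrange $c_1c_2>0$, whereupon the $\pm$ equals $\mathrm{sgn}(c_2 c_3)$, completing the reduction.
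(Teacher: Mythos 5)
You should first know that the paper does not prove this lemma at all: it is imported verbatim from \cite{D2}, so there is no in-paper argument to compare yours against, and your proposal has to stand on its own. It does. The parity bookkeeping is correct: for a group of diagonal sign matrices, equivariance decouples into the three scalar conditions you state, the two generators suffice, and the surviving monomials of degree one and two are exactly $x,yz$ for $f_1$, $y,xz$ for $f_2$, and $z,xy$ for $f_3$, giving $\dot x=a_1x+c_1yz$, $\dot y=b_1y+c_2xz$, $\dot z=e_1z+c_3xy$. Your sign analysis of the rescaling is also right, and it is the one place where care is genuinely needed: since flipping any one of $\alpha,\beta,\gamma$ flips all three quadratic signs simultaneously, the signs of the pairwise products $c_ic_j$ are invariants of the scaling action, so a pattern such as $(+,-,+)$ cannot be brought to $(1,1,\pm1)$ by scalings alone, and the coordinate permutations (which normalize $G$, hence preserve equivariance) are genuinely required, not a convenience. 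The one step you leave hedged (``I expect\dots'') closes in one line: for nonzero $c_1,c_2,c_3$ one has $(c_1c_2)(c_1c_3)(c_2c_3)=(c_1c_2c_3)^2>0$, so at least one pairwise product is positive, and the permutation placing that pair in the first two slots arranges $c_1c_2>0$; the residual sign is then $\mathrm{sgn}(c_2c_3)$ exactly as you claim. The magnitude normalization you invoke also works: in logarithmic variables it is a linear system whose matrix has rows $(-1,1,1)$, $(1,-1,1)$, $(1,1,-1)$, with determinant $4\neq 0$. Two implicit hypotheses deserve explicit mention, though they are defects of the lemma's statement rather than of your proof: you need $e_1\neq 0$ to set $\tau=1/e_1$ (and if $e_1<0$ this rescaling reverses time), and you need all $c_i\neq 0$ to normalize the quadratic coefficients; both are tacitly covered by the phrase ``having linear and quadratic terms,'' and the source \cite{D2} makes the same genericity assumption.
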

If the sign of the nonlinear term in the third equation is chosen to be positive, the system is gradient and its behaviour quite simple to analyse. We shall therefore be interested in system (\ref{d2}) only in the case that the nonlinear term of the third equation is negative (we will, however, include some results for the positive sign case). Following 
\cite{D2}, we shall call this system ``$D_2$'' and we denote the corresponding vector field by $X$.

There are two reasons for studying vector field $X$. First, it is the representative of an entire class of vector fields, namely quadratic fields equivariant under this specific symmetry group. Thus, be studying this field we illuminate the behaviour of its entire class. Second, its global behaviour is quite complicated, as demonstrated in \cite{Anastassiou}. Studying its phase space provides an understanding of the interplay between symmetry and complicated structures arising in it. 

We wish to study the existence of Darboux polynomials of this vector field. A polynomial $f:\mathbb{R}^n\rightarrow \mathbb{R}$ is called a Darboux polynomial for the vector field $X$ of $\mathbb{R}^n$ if it satisfies equation $\mathcal{L}_X f=kf$, where $\mathcal{L}_X f$ denotes the Lie derivative of $f$ along the flow
of the field $X$ and $k$ is a real polynomial $k:\mathbb{R}^n\rightarrow \mathbb{R}$, called the cofactor of $f$. In this case, the algebraic manifold $f=0$ remains invariant under the flow of $X$. If the cofactor is 
identically zero, $f$ is a polynomial first integral for $X$.

Knowledge of the Darboux polynomials a vector field has is of great importance, since they can be used to study the field's global phase space. One may see \cite{Tinghua2007,Llibre-Valls2011,Llibre2013,Llibre-Valls2015} for studies conserning this direction.

Here, in section \ref{darboux}, we construct the complete list of Darboux polynomials for the vector field $X$, for every value of its parameters. To achieve that, we solve equation $\mathcal{L}_Xf=kf$, exploiting the degrees of the polynomials involved. We also present, in the same section, the complete list of Darboux polynomials for vector field (\ref{d2}), in the case of the positive sign in the third equation. In section \ref{portraits}, we present the global phase portraits of $X$, using the Darboux polynomials already constructed. Some of the results of this section have already appeared in \cite{Anastassiou}. However, we include them here as well, in a more systematic way, both for the sake of completeness and to be able to comment on them.

Let us here note that in \cite{Llibre-Zhang} the Darboux polynomials for the R\"{o}ssler system were presented, a system wich possesses no symmetry, while in \cite{Swinnerton} the complete list of Darboux polynomials for the Lorenz system was constructed, a system which possesses a two--fold symmetry. Our ultimate goal is to detect symmetries (or other qualitative properties) of vector fields based on the algebraic properties of the module generated by their Darboux polynomials. The study of the Darboux polynomials of a system symmetric with respect to the $D_2$ group serves as a first step towards this direction. 

\section{Darboux polynomials for the vector field $X$}
\label{darboux}
In this section we are interested in obtaining the Darboux polynomials for the vector field $X=(ax+yz)\frac{\partial}{\partial x}+(by+xz)\frac{\partial}{\partial y}+(z-xy)\frac{\partial}{\partial z}$, introduced above. To accomplish that, we begin by studying the cofactor such a Darboux polynomial may have.

The degree of a monomial $C x^{\lambda} y^{\mu} z^{\nu}$ is $d=\lambda + \mu + \nu$, for $\lambda,\mu,\nu \in \mathbb{N}$ and $C \in \mathbb{R}$, and the degree of a polynomial is the largest of the degrees of its constituent monomials. If $f:\mathbb{R}^3\rightarrow \mathbb{R}$ is a Darboux polynomial for the vector field X, equation $\mathcal{L}_Xf=kf$ ensures that the cofactor $k:\mathbb{R}^3\rightarrow \mathbb{R}$ is a polynomial of degree at most 1. Actually, as we show in the next lemma, one may concentrate on constant cofactors.
\begin{lemma}
If there is no Darboux polynomial for $X$ having constant cofactor, $X$ does not have Darboux polynomials.
\end{lemma}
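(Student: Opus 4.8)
The plan is to prove the contrapositive: if $X$ possesses a Darboux polynomial, then it possesses one whose cofactor is constant. So suppose $f$ is a Darboux polynomial, $\mathcal{L}_Xf=kf$. By the observation just made, the cofactor has degree at most one, say $k=k_0+k_1x+k_2y+k_3z$. The idea I would pursue is to exploit the $D_2$-equivariance of $X$ to \emph{symmetrize} $f$ into a new Darboux polynomial whose cofactor has no linear part, so that it must be the constant $k_0$ (up to a factor).

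First I would record two elementary closure properties. (i) If $f_1,f_2$ are Darboux polynomials with cofactors $\ell_1,\ell_2$, then $f_1f_2$ is a Darboux polynomial with cofactor $\ell_1+\ell_2$, since $\mathcal{L}_X(f_1f_2)=(\mathcal{L}_Xf_1)f_2+f_1(\mathcal{L}_Xf_2)$. (ii) If $\sigma$ is any of the four linear maps generating the symmetry group, then $f\circ\sigma$ is again a Darboux polynomial, now with cofactor $k\circ\sigma$. Property (ii) is the structural heart of the argument, and I would verify it by the chain rule combined with the equivariance identity $X(\sigma x)=\sigma X(x)$: a short computation gives $\mathcal{L}_X(f\circ\sigma)=(\mathcal{L}_Xf)\circ\sigma=(k\circ\sigma)\,(f\circ\sigma)$.

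With these at hand, denote the four symmetries by $\sigma_0=\mathrm{id}$, $\sigma_1:(x,y,z)\mapsto(-x,-y,z)$, $\sigma_2:(x,y,z)\mapsto(-x,y,-z)$, $\sigma_3:(x,y,z)\mapsto(x,-y,-z)$, and form the product $F=\prod_{i=0}^{3}(f\circ\sigma_i)$. By (i) and (ii), $F$ is a Darboux polynomial with cofactor $\sum_{i=0}^{3}k\circ\sigma_i$. Writing $\ell=k_1x+k_2y+k_3z$ for the linear part, the constant parts add up to $4k_0$, while the linear parts cancel:
\begin{align*}
\sum_{i=0}^{3}(\ell\circ\sigma_i)
&=(k_1x+k_2y+k_3z)+(-k_1x-k_2y+k_3z)\\
&\quad+(-k_1x+k_2y-k_3z)+(k_1x-k_2y-k_3z)=0.
\end{align*}
The vanishing is transparent once one notices that each of $x,y,z$ transforms by a non-trivial character of $D_2\cong\mathbb{Z}_2\times\mathbb{Z}_2$, whose values over the group sum to zero. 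Hence $F$ has the constant cofactor $4k_0$.

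Finally, $F$ is a nonzero polynomial, being a product of nonzero polynomials in an integral domain (each $f\circ\sigma_i$ is nonzero because $\sigma_i$ is invertible). Thus $F$ is a genuine Darboux polynomial of $X$ with constant cofactor, which establishes the contrapositive and hence the lemma. I do not anticipate a serious obstacle: the only points demanding care are the equivariance computation underlying (ii) and the sign bookkeeping in the cancellation above; everything else is formal.
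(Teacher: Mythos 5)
Your proof is correct and follows essentially the same route as the paper: both exploit the equivariance of $X$ to show that $f\circ\sigma$ is a Darboux polynomial with cofactor $k\circ\sigma$, and then multiply transformed copies of $f$ so that the linear parts of the cofactors cancel, leaving a constant cofactor. The only cosmetic difference is that you symmetrize over all four group elements in a single product, whereas the paper reaches the same conclusion in two successive steps, first using $(x,y,z)\mapsto(-x,-y,z)$ to kill the $x$ and $y$ terms and then $(x,y,z)\mapsto(x,-y,-z)$ to kill the $z$ term.
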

\begin{proof}
We use Lemma 2.2 of \cite{Ferragut-Gasull2015}. Since vector field $X$ is equivariant under the linear mapping $\sigma (x,y,z)=(-x,-y,z)$, if $f(x,y,z)$ is a Darboux polynomial for $X$ having cofactor $k(x,y,z)$, the polynomial $f(\sigma (x,y,z))$ is also a Darboux polynomial for $X$ with cofactor $k(\sigma (x,y,z))$, as a direct computation shows. Indeed:
\begin{center}
$\mathcal{L}_Xf(\sigma (x,y,z))=D(f(\sigma (x,y,z)))\cdot X(x,y,z)=$\\
$=\nabla f(\sigma (x,y,z))D\sigma (x,y,z)\cdot X(x,y,z)=\nabla f(\sigma (x,y,z))\sigma(X(x,y,z))=$\\
$=\nabla f(\sigma (x,y,z))X(\sigma(x,y,z))=\mathcal{L}_{X(\sigma(x,y,z))}f(\sigma (x,y,z))=$\\
$=k(\sigma(x,y,z))f(\sigma (x,y,z))$.
\end{center}
Similar calculations show that $f(x,y,z)f(\sigma (x,y,z))$ is also a Darboux polynomial for $X$, having cofactor $k(x,y,z)+k(\sigma (x,y,z))$. It follows that, if $k(x,y,z)=c_0+c_1x+c_2t+c_3z$ is a cofactor of some Darboux polynomial, $k(x,y,z)+k(-x,-y,z)=c_0+c_3z$ is also a Darboux polynomial. 

Using the symmetry $(x,y,z)\mapsto (x,-y,-z)$ one can get rid of the term $c_3z$ as well. Thus, if a Darboux polynomial exists with cofactor $k(x,y,z)$, there exists a Darboux polynomial with a constant cofactor.  
\end{proof}
Due to this lemma, we shall proceed seeking for Darboux polynomials having constant cofactor.

Assume now that $$f(x,y,z)=\sum _{j=0}^df_j(x,y,z)$$ is a Darboux polynomial of $X$, where $f_j$ are homogeneous polynomials of degree $j$. Studying equation $\mathcal{L}_Xf=kf$, where as we have shown $k(x,y,z)=c_0$, we arrive at the following:
\begin{lemma}
Polynomial $f(x,y,z)$ is a Darboux polynomial for $X$ with constant cofactor $c_0$ if, and only if, the following equations are satisfied:
\begin{subequations}
\begin{align}
\label{eqn2solve1}
yz\frac{\partial f_d}{\partial x}+xz\frac{\partial f_d}{\partial y}-xy\frac{\partial f_d}{\partial z} &= 0\\
\label{eqn2solve2}
ax\frac{\partial f_j}{\partial x}+by\frac{\partial f_j}{\partial y}+z\frac{\partial f_j}{\partial z}+yz\frac{\partial f_{j-1}}{\partial x}+xz\frac{\partial f_{j-1}}{\partial y}-xy\frac{\partial f_{j-1}}{\partial z} &= c_0f_j\\
\label{eqn2solve3}
c_0f_0 &=0
\end{align}
\end{subequations}
where $j=1,..,d$.
\end{lemma}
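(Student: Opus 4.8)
The plan is to split the vector field into its linear and quadratic parts and then match the defining equation $\mathcal{L}_X f = c_0 f$ degree by degree. Write $X = L + Q$, where $L = ax\partial_x + by\partial_y + z\partial_z$ collects the linear terms and $Q = yz\partial_x + xz\partial_y - xy\partial_z$ collects the quadratic terms. The key observation concerns how each operator acts on the homogeneous pieces $f_j$: each term of $L$ is a linear monomial times a first-order derivative, so $L$ preserves homogeneous degree, sending a degree-$j$ polynomial to a degree-$j$ polynomial; each term of $Q$ is a quadratic monomial times a first-order derivative, so $Q$ raises the degree by one, sending degree $j$ to degree $j+1$.

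First I would expand $\mathcal{L}_X f = \sum_{j=0}^{d}(L f_j + Q f_j)$ and regroup the result according to homogeneous degree. With the conventions $f_{-1} = 0$ and $f_{d+1} = 0$, the degree-$m$ component of the left-hand side is $L f_m + Q f_{m-1}$, while the degree-$m$ component of the right-hand side $c_0 f$ is simply $c_0 f_m$.

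Next, I would invoke the elementary fact that a polynomial vanishes if and only if each of its homogeneous components vanishes; applied to $\mathcal{L}_X f - c_0 f$, this shows that the single identity $\mathcal{L}_X f = c_0 f$ is equivalent to the family of identities $L f_m + Q f_{m-1} = c_0 f_m$ for every $m$ from $0$ to $d+1$. Reading off the extremal and generic cases then yields exactly the stated system: the top degree $m = d+1$ forces $Q f_d = 0$, which is equation (\ref{eqn2solve1}); the range $1 \le m \le d$ gives equation (\ref{eqn2solve2}) with $j = m$; and the bottom degree $m = 0$, where $L f_0 = 0$ because $f_0$ is constant, collapses to $c_0 f_0 = 0$, which is equation (\ref{eqn2solve3}). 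Conversely, summing these identities over all $m$ reconstructs $\mathcal{L}_X f = c_0 f$, giving the reverse implication.

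This argument is almost entirely bookkeeping, so I do not expect a genuine obstacle; the only points requiring care are the degree shift induced by $Q$, the resulting off-by-one indexing (the appearance of $f_{j-1}$ alongside $f_j$ in equation (\ref{eqn2solve2})), and the correct handling of the boundary terms at $m = 0$ and $m = d+1$, where $f_{-1}$ and $f_{d+1}$ are absent.
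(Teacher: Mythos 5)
Your proof is correct and is exactly the degree-by-degree bookkeeping the paper relies on: the paper states this lemma without an explicit proof, presenting it as the immediate result of splitting $X$ into its linear and quadratic parts and collecting homogeneous components of $\mathcal{L}_X f = c_0 f$, which is precisely your argument, including the boundary cases $m=d+1$ and $m=0$.
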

Thus, to compute the Darboux polynomials of $X$ with constant cofactor, we have to solve the above system of equations. We begin by solving the first of these three equations using the method of characteristics.
\begin{lemma}
The solutions of equation (\ref{eqn2solve1}) are of the form 
\begin{center}
$f_d(x,y,z)=G(x^2-y^2,x^2+z^2)$,
\end{center}
where $G:\mathbb{R}^2\rightarrow \mathbb{R}$ a homogeneous polynomial of degree $d/2$.
\end{lemma}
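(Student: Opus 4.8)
The plan is to treat equation (\ref{eqn2solve1}) as a homogeneous linear first-order partial differential equation $L f_d = 0$ for the operator $L = yz\,\partial_x + xz\,\partial_y - xy\,\partial_z$, and to integrate it by the method of characteristics. The associated characteristic system is
\[
\frac{dx}{yz}=\frac{dy}{xz}=\frac{dz}{-xy},
\]
valid away from the coordinate planes where the denominators vanish; the vector field $(yz,xz,-xy)$ is nonzero on a dense open set, so the method applies there and the resulting algebraic identities will extend to all of $\mathbb{R}^3$ by continuity.

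First I would extract two functionally independent first integrals from this system. Pairing the first two ratios and cancelling the common factor $z$ gives $x\,dx = y\,dy$, hence $d(x^2-y^2)=0$; pairing the first and third and cancelling $y$ gives $x\,dx=-z\,dz$, hence $d(x^2+z^2)=0$. Thus $u=x^2-y^2$ and $v=x^2+z^2$ are first integrals, and their gradients $(2x,-2y,0)$ and $(2x,0,2z)$ are linearly independent wherever $yz\neq 0$, so $u,v$ are functionally independent. The method of characteristics then yields that every solution of (\ref{eqn2solve1}) is of the form $f_d=G(u,v)=G(x^2-y^2,x^2+z^2)$ for some function $G$ of two variables.

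It remains to pin down $G$ using that $f_d$ is a homogeneous polynomial of degree $d$. Since $u$ and $v$ are algebraically independent homogeneous polynomials of degree $2$, expressing the polynomial $f_d$ through them forces $G$ itself to be a polynomial (it can be read off on a two-dimensional slice on which $(u,v)$ serve as coordinates). For the degree, I would invoke Euler's identity: applying $x\,\partial_x+y\,\partial_y+z\,\partial_z$ to $f_d=G(u,v)$ and using that $u$ and $v$ are homogeneous of degree $2$ gives $2(u\,G_u+v\,G_v)=d\,G$, that is $u\,G_u+v\,G_v=(d/2)G$, which is exactly the Euler relation characterising $G$ as homogeneous of degree $d/2$ (and in particular forcing $d$ to be even, with $G\equiv 0$ otherwise).

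The routine part is the integration of the characteristic pairs; the step requiring the most care is the passage from the method-of-characteristics conclusion — an \emph{arbitrary} function $G$ of the invariants — to the assertion that $G$ is a \emph{homogeneous polynomial} of degree $d/2$. This rests on the algebraic independence of $u$ and $v$, so that $G$ is uniquely determined and polynomial, together with the Euler computation above, and on checking that the identity obtained away from the coordinate planes persists on all of $\mathbb{R}^3$.
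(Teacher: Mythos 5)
Your proposal takes essentially the same route as the paper's proof: the method of characteristics applied to the operator $yz\,\partial_x + xz\,\partial_y - xy\,\partial_z$, the same two first integrals $u = x^2-y^2$ and $v = x^2+z^2$, and the conclusion that $f_d$ must be a function of these invariants. In fact you supply details that the paper's two-line proof leaves implicit — notably the Euler-identity computation showing $G$ is homogeneous of degree $d/2$ (hence that $d$ is even) and the argument that $G$ is itself a polynomial — so the proposal is correct and, if anything, more complete than the original.
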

\begin{proof}
The characteristics of this p.d.e. are the solutions of system:
\[
\begin{cases}
\dot{x}=yz\\
\dot{y}=xz\\
\dot{z}=-xy
\end{cases},
\]
and it is easy to see that $u=x^2-y^2,\ v=x^2+z^2$ are two first integrals for this system. Polynomial $f_d$ is constant on the characteristics, thus the conclusion.
\end{proof}
Notice that we have also showed that $d$, that is the degree of the Darboux polynomial we are after, should be an even number.

We next proceed to show that the Darboux polynomial $f$ as no terms of degree $d-1$.
\begin{lemma}
Polynomial $f$ does not contain any terms of order $d-1$.
\end{lemma}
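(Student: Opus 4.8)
The plan is to extract the degree-$d$ component of the identity $\mathcal{L}_X f = c_0 f$, which is precisely equation (\ref{eqn2solve2}) for $j=d$, and to show it forces $f_{d-1}$ to vanish. Write $\mathcal{L}_X = A + B$ with $A = ax\partial_x + by\partial_y + z\partial_z$ (degree-preserving) and $B = yz\partial_x + xz\partial_y - xy\partial_z$ (degree-raising by one); then (\ref{eqn2solve2}) at $j=d$ reads
\[ B f_{d-1} = c_0 f_d - A f_d . \]
I would first determine which monomials can occur in $f_{d-1}$, and only afterwards substitute into this relation and match coefficients.

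For the first step I would grade monomials $x^\lambda y^\mu z^\nu$ by the class $(\lambda,\mu,\nu)\bmod 2 \in (\mathbb{Z}_2)^3$. The operator $A$ preserves this class, multiplication by the constant $c_0$ preserves it, while each term of $B$ lowers one exponent and raises two, so $B$ shifts the class by $(1,1,1)$. By the previous lemma the top part $f_d = G(x^2-y^2,x^2+z^2)$ has only even exponents, i.e. it lies in the class $(0,0,0)$; hence so does the whole right-hand side of the displayed relation. Splitting the relation into its $(\mathbb{Z}_2)^3$-components, $B$ must then annihilate every component of $f_{d-1}$ except the one of class $(1,1,1)$. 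Since the previous lemma identifies $\ker B$ on homogeneous polynomials with the polynomials in $x^2-y^2$ and $x^2+z^2$, which are necessarily of even degree, $B$ is injective on homogeneous polynomials of odd degree; as $\deg f_{d-1}=d-1$ is odd, all of those components vanish. Thus every monomial of $f_{d-1}$ has all three exponents odd, so $f_{d-1}=xyz\,q(x^2,y^2,z^2)$ for some $q$ homogeneous of degree $d-4$. (Equivalently, $\mathcal{L}_X-c_0$ commutes with the pullback by each generator of the symmetry group, exactly as in the reduction of the cofactor above, so the solution space is graded by the characters of the group; the top part $f_d$ is invariant, hence the relevant component of $f$ is invariant, and an invariant polynomial is a combination of $x^2,y^2,z^2,xyz$ whose odd-degree parts are divisible by $xyz$.)

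For the second step I would substitute $f_{d-1}=xyz\,q$ and pass to the variables $X_1=x^2,\ X_2=y^2,\ X_3=z^2$. A direct computation gives
\[ B(xyz\,q) = (X_2X_3 + X_1X_3 - X_1X_2)\,q + 2X_1X_2X_3\,(q_{X_1}+q_{X_2}-q_{X_3}), \]
while the right-hand side becomes $c_0 G - (2aX_1\partial_{X_1}+2bX_2\partial_{X_2}+2X_3\partial_{X_3})G$ with $G=G(X_1-X_2,X_1+X_3)$. Every monomial of $B(xyz\,q)$ is divisible by two distinct $X_i$; in particular no pure power $X_i^{d/2}$ occurs, so comparing the coefficients of $X_1^{d/2},X_2^{d/2},X_3^{d/2}$ first pins the admissible cofactor values (forcing $c_0\in\{ad,bd,d\}$ unless the corresponding extreme coefficient of $G$ vanishes). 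I would then compare the remaining coefficients, organised by the recursion in which the coefficient of $X_1^AX_2^BX_3^C$ on the left is a fixed combination of three neighbouring coefficients of $q$, and push this comparison through to conclude that every coefficient of $q$ is zero.

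The main obstacle is exactly this last coefficient comparison. Mere ideal-membership (the absence of pure powers) gives only finitely many relations and does not kill $q$ on its own; one genuinely has to play the precise coefficients produced by $B$ against those of $c_0 f_d - A f_d$ and track them through the recursion uniformly in the parameters $a,b$ and in $c_0$. The payoff is clean, though: once $q\equiv 0$ we get $f_{d-1}=0$, and the displayed relation collapses to the eigenvalue condition $A f_d = c_0 f_d$, which is what governs the admissible $G$ and $c_0$ in the analysis that follows.
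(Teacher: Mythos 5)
Your first step is correct, and it is a genuinely different route from the paper's. Grading monomials by exponent parities in $(\mathbb{Z}_2)^3$, observing that $B=yz\partial_x+xz\partial_y-xy\partial_z$ shifts the parity class by $(1,1,1)$ while $A=ax\partial_x+by\partial_y+z\partial_z$ and multiplication by $c_0$ preserve it, and then invoking the preceding lemma (any homogeneous polynomial annihilated by $B$ is a polynomial in $x^2-y^2$ and $x^2+z^2$, hence of even degree) to conclude that $B$ is injective on homogeneous polynomials of odd degree, does legitimately force $f_{d-1}=xyz\,q(x^2,y^2,z^2)$. Your formula for $B(xyz\,q)$ in the variables $X_i=x^2,y^2,z^2$ is also correct, as is the observation that comparing the pure powers $X_i^{d/2}$ constrains $c_0$. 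The paper instead solves the transport equation $Bf_{d-1}=c_0f_d-Af_d$ by the method of characteristics, in the variables $u=x^2-y^2$, $v=x^2+z^2$, $w=z$.

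The problem is that your argument stops exactly where the lemma still has to be proved. What remains is to show that
\[
(X_2X_3+X_1X_3-X_1X_2)\,q+2X_1X_2X_3\,(q_{X_1}+q_{X_2}-q_{X_3})=c_0G-\bigl(2aX_1\partial_{X_1}+2bX_2\partial_{X_2}+2X_3\partial_{X_3}\bigr)G
\]
admits no nonzero polynomial solution $q$ (which then also forces the right-hand side, and hence the eigenvalue relation $Af_d=c_0f_d$, to hold). You only sketch a coefficient recursion for this and explicitly concede it is ``the main obstacle'' that one still has to ``push through'' uniformly in $a$, $b$ and $c_0$; the pure-power comparison you do carry out pins down $c_0$ but, as you yourself note, cannot kill $q$ on its own. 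So the proposal is a correct reduction plus an acknowledged unproven claim, not a proof: the heart of the lemma is missing. For contrast, the paper disposes of precisely this step analytically: along each characteristic of $B$ the equation becomes an ODE in $w$ whose general solution involves an incomplete elliptic integral of the first kind divided by $\sqrt{v-u}$, which is not algebraic, hence never a polynomial, so no term of degree $d-1$ can exist. To finish along your lines you would need to actually run the induction on the coefficients $q_{ABC}$ (injectivity of the left-hand operator is free from your parity argument; the issue is that its image must be shown to meet the space of admissible right-hand sides only in $0$), or replace it by a compatibility condition obtained by integrating the equation over a closed characteristic --- which is what the paper's elliptic-integral computation accomplishes in disguise.
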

\begin{proof}
Equation (\ref{eqn2solve2}), for $j=d$ and $f_d$ the polynomial we computed above, becomes:
\begin{center}
$yz\frac{\partial f_{d-1}}{\partial x}+xz\frac{\partial f_{d-1}}{\partial y}-xy\frac{\partial f_{d-1}}{\partial z}=c_0f_d-ax\frac{\partial f_d}{\partial x}-by\frac{\partial f_d}{\partial y}-z\frac{\partial f_d}{\partial z}$.
\end{center}

The characteristic equations are the same as before, and we also get the same first integrals. 

We consider thus the following transformation of variables:
\begin{center}
$u=x^2-y^2,\ v=x^2+z^2,\ w=z$,
\end{center}
with inverse
\begin{center}
$x=\pm \sqrt{v-w^2},\ y=\pm \sqrt{v-w^2-u},\ z=w$.
\end{center}
Rewriting the pde above in the variables $u,v,w$ and keeping $u,v$ fixed, we arrive at the following ode:
\begin{center}
$\frac{df_{d-1}}{dw}(w)=\pm \frac{c_0 G(u,v)}{\sqrt{v-w^2-u}\sqrt{v-w^2}}$,
\end{center}
the solution of which is:
\begin{center}
$f_{d-1}(w)=c\pm \frac{c_0EI(\arcsin(\sqrt{v^{-1}}w),\frac{v}{-u+v})}{\sqrt{v-u}}$,
\end{center} 
and thus:
\begin{center}
$f_{d-1}(x,y,z)=c\pm \frac{c_0 EI(\arcsin(\sqrt{\frac{z^2}{x^2+z^2}}),\frac{x^2+z^2}{y^2+z^2})G(x^2-y^2,x^2+z^2)}{\sqrt{y^2+z^2}}$,
\end{center}
where $EI$ stands for the elliptic integral of the first kind. Since $f_{d-1}$ should be a polynomial, we conclude that terms of order $d-1$ do not exist.
\end{proof}
We proceed to find the terms of order $d-2$, that is, we solve equation (\ref{eqn2solve2}) for $j=d-1$. Since $f_{d-1}=0$, it reads as:
\begin{center}
$yz\frac{\partial f_{d-2}}{\partial x}+xz\frac{\partial f_{d-2}}{\partial y}-xy\frac{\partial f_{d-2}}{\partial z} = 0$,
\end{center}
which is actually the same as equation (\ref{eqn2solve1}) we have already solved. Thus $f_{d-2}(x,y,z)=W(x^2-y^2,x^2+z^2)$, where $W:\mathbb{R}^2\rightarrow \mathbb{R}$ a homogeneous polynomial of degree $(d-2)/2$ (remember that, as we have shown, $d$ is an even number).

To find the terms of degree $d-3$ we solve equation (\ref{eqn2solve2}) for $j=d-2$. We conclude, once again, that there are no terms of degree $d-3$ and, in this way, we have proved the following:
\begin{lemma}
Darboux polynomials of vector field $X$ with constant cofactor are of the form:
$$f(x,y,z)=\sum _{j=0}^{d/2}G_j(x^2-y^2,x^2+z^2),$$
where $G_j:\mathbb{R}^2\rightarrow \mathbb{R}$ homogeneous polynomials of degree $2j$.
\end{lemma}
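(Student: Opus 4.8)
The plan is to run a finite downward induction on the degree index, turning the computations already carried out into a rigid two-step pattern. Writing $\mathcal{D}_0 = yz\,\partial_x + xz\,\partial_y - xy\,\partial_z$ for the quadratic part of $X$ and $\mathcal{D}_1 = ax\,\partial_x + by\,\partial_y + z\,\partial_z$ for its linear part, equation (\ref{eqn2solve1}) reads $\mathcal{D}_0 f_d = 0$, while equation (\ref{eqn2solve2}) for index $j$ becomes
\begin{equation*}
\mathcal{D}_0 f_{j-1} = (c_0 - \mathcal{D}_1) f_j .
\end{equation*}
Here $\mathcal{D}_0$ raises the degree of a homogeneous polynomial by one, whereas $c_0 - \mathcal{D}_1$ preserves it. Two facts established above will be reused verbatim: (A) any homogeneous polynomial $g$ with $\mathcal{D}_0 g = 0$ equals $H(x^2-y^2,\,x^2+z^2)$ for some polynomial $H$, which is the content of the lemma solving (\ref{eqn2solve1}); and (B) if $g$ is a homogeneous polynomial of odd degree solving $\mathcal{D}_0 g = (c_0 - \mathcal{D}_1)H(x^2-y^2,\,x^2+z^2)$, then $g = 0$ — this is precisely the mechanism by which $f_{d-1}$ was shown to vanish, via the coordinates $u = x^2-y^2$, $v = x^2+z^2$, $w = z$, in which $\mathcal{D}_0 = -xy\,\partial_w$, so that integration in $w$ produces a non-polynomial elliptic integral of the first kind while no nonzero polynomial in $u,v$ can have odd degree in $(x,y,z)$.

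Recalling that $d$ is even, I would prove by induction on $m$, for $0 \le m \le d/2$, the statement $P(m)$: the component $f_{d-2m}$ is of the form $H_m(x^2-y^2,\,x^2+z^2)$, and $f_{d-2m-1} = 0$ whenever $d-2m-1 \ge 1$. The base case $P(0)$ is exactly the two lemmas already proved, namely $f_d = G(x^2-y^2,\,x^2+z^2)$ and $f_{d-1} = 0$. For the inductive step, assuming $P(m)$ with $d-2m \ge 2$, substituting $f_{d-2m-1} = 0$ into (\ref{eqn2solve2}) with $j = d-2m-1$ gives $\mathcal{D}_0 f_{d-2m-2} = 0$, so fact (A) yields $f_{d-2m-2} = H_{m+1}(x^2-y^2,\,x^2+z^2)$; and provided $d-2m-2 \ge 1$, equation (\ref{eqn2solve2}) with $j = d-2m-2$ reads $\mathcal{D}_0 f_{d-2m-3} = (c_0 - \mathcal{D}_1)H_{m+1}(x^2-y^2,\,x^2+z^2)$, which has exactly the shape treated in fact (B), so that the odd-degree component $f_{d-2m-3}$ is forced to vanish. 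This establishes $P(m+1)$. Carrying the induction down to $m = d/2$ would show that every odd-degree component vanishes and every even-degree component is a polynomial in $x^2-y^2$ and $x^2+z^2$ (the terminal $f_0$ being the constant $H_{d/2}$, consistent with (\ref{eqn2solve3})), and relabelling the surviving even components would give $f = \sum_{j=0}^{d/2} G_j(x^2-y^2,\,x^2+z^2)$.

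The step that must be checked with care, rather than merely asserted, is the repeated applicability of fact (B): one has to confirm that at every odd index the right-hand side $(c_0 - \mathcal{D}_1)H_{m+1}(x^2-y^2,\,x^2+z^2)$ really is of the same type as the one handled for $f_{d-1}$ — namely $(c_0 - \mathcal{D}_1)$ applied to a polynomial in the two invariants — so that the identical passage to the $w$-ODE again isolates a genuinely non-polynomial elliptic contribution (the part whose numerator is constant in $w$) that cannot be cancelled by the remaining terms. Granting this, the alternation is rigid: even-index components always reduce to the homogeneous kernel equation $\mathcal{D}_0 g = 0$ and hence to fact (A), while odd-index components always reduce to the obstructed inhomogeneous equation and hence to fact (B). Everything else is the routine bookkeeping of a descending induction between consecutive degrees.
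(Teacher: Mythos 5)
Your proposal is correct and takes essentially the same route as the paper: the paper's own proof is exactly this descending alternation---solve for $f_d$, kill $f_{d-1}$ via the elliptic-integral obstruction, observe that $f_{d-2}$ then satisfies the same homogeneous equation as $f_d$, kill $f_{d-3}$ ``once again''---and your induction with facts (A) and (B) merely makes the paper's concluding ``in this way'' explicit. The caveat you flag, namely that fact (B) must handle the full right-hand side $(c_0-\mathcal{D}_1)H_{m+1}(x^2-y^2,x^2+z^2)$ and not only its $c_0H_{m+1}$ part, is a genuine point of care, but the paper glosses over exactly the same issue by reusing its $f_{d-1}$ computation verbatim at the lower odd degrees.
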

Actually, we can show something more:
\begin{lemma}
\label{eachterm}
Let $f(x,y,z)$ presented above be a Darboux polynomial of $X$, with cofactor $c_0$. Then, each term of $f(x,y,z)$ is also a Darboux polynomial for $X$, with the same cofactor.
\end{lemma}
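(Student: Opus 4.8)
The plan is to exploit the splitting of $X$ into its linear and quadratic homogeneous parts, together with the fact (already used in the proof that $f_d=G(x^2-y^2,x^2+z^2)$) that $u=x^2-y^2$ and $v=x^2+z^2$ are first integrals of the quadratic part. Write $X=X_1+X_2$, where $X_1=ax\frac{\partial}{\partial x}+by\frac{\partial}{\partial y}+z\frac{\partial}{\partial z}$ is the linear part and $X_2=yz\frac{\partial}{\partial x}+xz\frac{\partial}{\partial y}-xy\frac{\partial}{\partial z}$ is the quadratic part. The operator $\mathcal{L}_{X_1}$ acts diagonally on monomials and hence preserves the degree of a homogeneous polynomial, whereas $\mathcal{L}_{X_2}$ raises the degree by one. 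By the previous lemma we may write $f=\sum_{j=0}^{d/2}h_j$ with $h_j=G_j(x^2-y^2,x^2+z^2)$ homogeneous of degree $2j$; these are precisely the terms whose Darboux property we want to establish.

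The key step is to observe that $\mathcal{L}_{X_2}h_j=0$ for every $j$. Indeed, the characteristic system associated with equation (\ref{eqn2solve1}) is exactly $\dot{x}=yz,\ \dot{y}=xz,\ \dot{z}=-xy$, which is the flow of $X_2$, and $u,v$ were shown to be first integrals of it, i.e. $\mathcal{L}_{X_2}u=\mathcal{L}_{X_2}v=0$. By the chain rule $\mathcal{L}_{X_2}G_j(u,v)=(\partial_uG_j)\,\mathcal{L}_{X_2}u+(\partial_vG_j)\,\mathcal{L}_{X_2}v=0$. Summing over $j$ gives $\mathcal{L}_{X_2}f=0$, so the Darboux equation $\mathcal{L}_Xf=c_0f$ collapses to the purely linear relation $\mathcal{L}_{X_1}f=c_0f$.

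I would then finish by comparing homogeneous components. Since $\mathcal{L}_{X_1}$ preserves degree and the nonzero components of $f$ sit in the distinct (even) degrees $2j$, the identity $\mathcal{L}_{X_1}f=c_0f$ splits degree by degree into $\mathcal{L}_{X_1}h_j=c_0h_j$ for each $j$. Adding back the vanishing quadratic contribution yields $\mathcal{L}_Xh_j=\mathcal{L}_{X_1}h_j+\mathcal{L}_{X_2}h_j=c_0h_j$, which is exactly the assertion that each term $h_j$ is a Darboux polynomial of $X$ with the same cofactor $c_0$.

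I do not expect a genuine obstacle here: once the annihilation $\mathcal{L}_{X_2}h_j=0$ is in hand, the remainder is bookkeeping of homogeneous degrees. The only point requiring care is to confirm that the decoupling of $\mathcal{L}_{X_1}f=c_0f$ into separate degrees is legitimate, which is guaranteed because all surviving components of $f$ lie in distinct even degrees and $\mathcal{L}_{X_1}$ maps degree $2j$ to degree $2j$.
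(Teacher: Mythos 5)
Your proof is correct, and its skeleton coincides with the paper's: decompose $X=X_1+X_2$ into its linear and quadratic parts, show that the quadratic part annihilates each homogeneous term, then split the remaining linear identity degree by degree. The one genuine difference is how the vanishing $\mathcal{L}_{X_2}G_j=0$ is obtained. The paper extracts it from the Darboux equation itself by a parity count: in $\sum(\mathcal{L}_{X_1}G_j+\mathcal{L}_{X_2}G_j)=c_0\sum G_j$ the terms $\mathcal{L}_{X_2}G_j$ are homogeneous of the pairwise distinct odd degrees $2j+1$, while every other term has even degree, so each of them must vanish separately. You instead prove the vanishing unconditionally, from structure alone: each term is a polynomial in $u=x^2-y^2$ and $v=x^2+z^2$, these are first integrals of the quadratic part (whose flow is exactly the characteristic system of equation (\ref{eqn2solve1})), and the chain rule gives $\mathcal{L}_{X_2}G_j(u,v)=(\partial_uG_j)\,\mathcal{L}_{X_2}u+(\partial_vG_j)\,\mathcal{L}_{X_2}v=0$. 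Your route buys slightly more: $\mathcal{L}_{X_2}$ kills every polynomial in $u,v$, whether or not it comes from a Darboux polynomial, so the whole equation collapses to $\mathcal{L}_{X_1}f=c_0f$ before any comparison of components is needed. The paper's parity argument, by contrast, uses only the evenness of the degrees $2j$ and would work even without the explicit $u,v$ form of the components. Both arguments rest on the preceding lemma and finish identically, using that $\mathcal{L}_{X_1}$ preserves degree and that the degrees $2j$ are distinct.
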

\begin{proof}
We study, once again, equation $\mathcal{L}_Xf=c_0f$. We have:
\begin{center}
$\mathcal{L}_Xf=c_0f\Rightarrow \mathcal{L}_X(\sum G_j)=c_0\sum G_j\Rightarrow\sum (\mathcal{L}_XG_j)=c_0\sum G_j\Rightarrow$\\
$\Rightarrow \sum (\mathcal{L}_{X_1}G_j+\mathcal{L}_{X_2}G_j)=c_0\sum G_j$,
\end{center}
where $X_1,\ X_2$ are the linear and nonlinear parts of $X$, respectively. Since each $G_j$ is of degree $2j$, $\mathcal{L}_{x_1}G_j$ is of degree $2j$ and $\mathcal{L}_{X_2}G_j$ is of degree $2j+1$. We conclude that $\mathcal{L}_{X_2}G_j=0$ and thus $\mathcal{L}_{X}G_j=c_0G_j$, proving the statement.  
\end{proof}
We now recall, from \cite{Christopher}, the following:
\begin{lemma}
Let $f$ be a polynomial and $f=\prod _{i=1}^sf_i^{a_i}$ its decomposition into irreducible factors. The polynomial $f$ is a Darboux polynomial for a vector field if, and only if, $f_i$'s are also Darboux polynomials for the same field. Furthermore, $k=\sum _{i=1}^sa_ik_i$, where $k$ the cofactor of $f$ and $k_i$'s the cofactors of $f_i$'s.
\end{lemma}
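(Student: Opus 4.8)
The plan is to prove both implications by exploiting two structural facts: that the Lie derivative $\mathcal{L}_X$ is a derivation on the polynomial ring $\mathbb{R}[x_1,\dots,x_n]$, so that it obeys the Leibniz rule $\mathcal{L}_X(fg)=(\mathcal{L}_Xf)g+f(\mathcal{L}_Xg)$, and that this ring is a unique factorization domain. The first fact drives the routine direction; the second is what makes the converse work.

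For the ``if'' direction I would argue that a power of a Darboux polynomial is again Darboux. If $\mathcal{L}_X f_i = k_i f_i$, an easy induction on the exponent using the Leibniz rule gives $\mathcal{L}_X(f_i^{a_i}) = a_i k_i f_i^{a_i}$. Applying the Leibniz rule once more to the product $f=\prod_i f_i^{a_i}$ then yields $\mathcal{L}_X f = \big(\sum_i a_i k_i\big) f$. This single computation simultaneously shows that $f$ is Darboux and that its cofactor is $\sum_i a_i k_i$, settling both the claim and the cofactor formula in this direction.

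For the ``only if'' direction, which I expect to be the main obstacle, I would fix an index $i$ and write $f = f_i^{a_i} g$, where $g = \prod_{j\neq i} f_j^{a_j}$ and $\gcd(f_i, g)=1$ because $f_i$ is irreducible and does not divide $g$. Expanding $\mathcal{L}_X f = k f$ via the Leibniz rule gives
\[
a_i f_i^{a_i-1}(\mathcal{L}_X f_i)\, g + f_i^{a_i}(\mathcal{L}_X g) = k\, f_i^{a_i} g.
\]
Dividing through by $f_i^{a_i-1}$ in the field of fractions and rearranging leaves $a_i (\mathcal{L}_X f_i)\, g = f_i\big(k g - \mathcal{L}_X g\big)$, so $f_i$ divides $a_i(\mathcal{L}_X f_i)\,g$. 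Since $a_i$ is a nonzero constant and $\gcd(f_i,g)=1$, unique factorization forces $f_i \mid \mathcal{L}_X f_i$. Hence $\mathcal{L}_X f_i = k_i f_i$ for a polynomial $k_i$, i.e. each irreducible factor is itself a Darboux polynomial; the identity $k = \sum_i a_i k_i$ then follows from the ``if'' direction already established.

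The delicate points are precisely the coprimality step and the conclusion that $k_i$ is a genuine polynomial. The divisibility $f_i \mid \mathcal{L}_X f_i$ is exactly what guarantees that the cofactor exists as a polynomial rather than merely a rational function, and it is here that the irreducibility of $f_i$ together with the unique factorization property is indispensable; without them one could only conclude a relation in the field of fractions.
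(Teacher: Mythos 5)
Your proof is correct. Note that the paper itself contains no proof of this lemma: it is recalled, without argument, from the cited work of Christopher and Llibre, so there is no internal proof to compare against. Your argument is the standard one for this classical fact: the Leibniz rule for the derivation $\mathcal{L}_X$ gives the ``if'' direction and the cofactor formula $k=\sum_i a_ik_i$ at once, while for the ``only if'' direction the decomposition $f=f_i^{a_i}g$ with $\gcd(f_i,g)=1$, together with irreducibility and unique factorization in $\mathbb{R}[x_1,\dots,x_n]$, forces $f_i\mid \mathcal{L}_Xf_i$, which is exactly what makes the cofactor $k_i$ a genuine polynomial; the final identification of $k$ with $\sum_i a_ik_i$ uses that cofactors are unique because the polynomial ring is an integral domain. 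One can even avoid the detour through the field of fractions: in the identity $a_if_i^{a_i-1}(\mathcal{L}_Xf_i)g+f_i^{a_i}\mathcal{L}_Xg=kf_i^{a_i}g$ every term is divisible by $f_i^{a_i-1}$ inside the polynomial ring, so the division is already exact there.
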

Our goal is to find the basic Darboux polynomials for the vector field $X$. To make it more explicit, under the light of the previous lemma, we give the following definition.
\begin{definition}
Let $\textsl{Dar}$ be the set of all Darboux polynomials of a vector field. A finite collection $\mathcal{I}$ of such polynomials generate $\textsl{Dar}$ if every member of $\textsl{Dar}$ is the sum of members of $\mathcal{I}$ with the same cofactor, or a product of members of $\mathcal{I}$.  
\end{definition}
Using \ref{eachterm}, we see that every Darboux polynomial of $X$ can be constructed from the polynomials $x^2-y^2,\ x^2+z^2$, using the operations of addition and multiplication. Thus, we search for the generating Darboux polynomials among the linear combinations $m(x^2-y^2)+n(x^2+z^2),\ m,n\in \mathbb{R}$.

This combination should satisfy equation (\ref{eqn2solve2}), for
\begin{center}
$j=2, f_j(x,y,z)=m(x^2-y^2)+n(x^2+z^2),\ m,n\in \mathbb{R}$ and $f_{j-1}=0$.
\end{center}
We get:
\begin{center}
$(2am+2an)x^2-2bmy^2+2nz^2=c_0(mx^2-my^2+nx^2+nz^2)$,
\end{center}
which gives us the system:
\[
\begin{cases}
2am+2an=c_0m+c_0n\\
2bm=mc_0\\
2n=nc_0
\end{cases},
\] 
having the following solutions:
\begin{itemize}
\item{$n=0,\ a=b,\ c_0=2b$}
\item{$m=0,\ a=1,\ c_0=2$}
\item{$m=-n,\ b=1,\ c_0=2$}
\end{itemize}
Combining all the above, we have the main result of this section.
\begin{theorem}
Vector field $X=(ax+yz)\frac{\partial}{\partial x}+(by+xz)\frac{\partial}{\partial y}+(z-xy)\frac{\partial}{\partial z}$ possesses the following Darboux polynomials:
\begin{itemize}
\item {$H_b(x,y,z)=x^2+z^2$, for $a=1$ and $b\in \mathbb{R}$, having cofactor 2.}
\item {$H_a(x,y,z)=y^2+z^2$, for $b=1$ and $a\in \mathbb{R}$, having cofactor 2.}
\item {$H_{ab}(x,y,z)=x^2-y^2$, for $a=b\in \mathbb{R}$, having cofactor 2a.}
\end{itemize}
All other Darboux polynomials are generated from these three.
\end{theorem}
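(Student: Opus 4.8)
The plan is to treat this theorem as the capstone of the preceding lemmas. Having already reduced the problem to constant cofactors, and having shown that any such Darboux polynomial lies in $\mathbb{R}[u,v]$ with $u=x^2-y^2$ and $v=x^2+z^2$, I would first determine the admissible degree-two combinations by solving the linear system already displayed above, reading off the three parameter families; and then promote this to the full generation statement using Lemma \ref{eachterm} together with the factorization lemma of Christopher.

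First I would carry out the computation set up just before the statement. Substituting $f_2=m(x^2-y^2)+n(x^2+z^2)$ and $f_{1}=0$ into equation (\ref{eqn2solve2}) with $j=2$ and equating the coefficients of $x^2,y^2,z^2$ yields
\[
2a(m+n)=c_0(m+n),\qquad 2bm=c_0m,\qquad 2n=c_0n.
\]
A clean way to organise the case analysis is to note that on the three-dimensional space $\mathrm{span}\{x^2,y^2,z^2\}$ the linear part $X_1$ acts diagonally, $x^2\mapsto 2a\,x^2$, $y^2\mapsto 2b\,y^2$, $z^2\mapsto 2\,z^2$, so that a degree-two Darboux polynomial is exactly an eigenvector of this operator lying in the plane $\mathrm{span}\{u,v\}$. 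Since $x^2,y^2,z^2$ are the only eigendirections and this plane contains none of them, a nontrivial eigenvector in the plane exists precisely when two of the eigenvalues $2a,2b,2$ coincide: $a=b$ produces $u=x^2-y^2$ with cofactor $2a$; $a=1$ produces $v=x^2+z^2$ with cofactor $2$; and $b=1$ produces $v-u=y^2+z^2$ with cofactor $2$. These are $H_{ab},H_b,H_a$, and the degenerate locus $a=b=1$, where the whole plane becomes eigen, is subsumed because there all three are simultaneously available.

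Finally, for the clause that all other Darboux polynomials are generated from these three, I would argue as follows. By the constant-cofactor reduction it suffices to treat a Darboux polynomial $f$ of cofactor $c_0$; by the form lemma and Lemma \ref{eachterm}, $f=\sum_j G_j$ with each homogeneous piece $G_j\in\mathbb{R}[u,v]$ itself Darboux of the same cofactor $c_0$. Extending the diagonalisation, $X_1$ acts on $\mathbb{R}[x^2,y^2,z^2]$ by $x^{2p}y^{2q}z^{2r}\mapsto 2(ap+bq+r)\,x^{2p}y^{2q}z^{2r}$, so each $G_j$ is a combination of monomials of common weight $ap+bq+r=c_0/2$ that additionally lies in $\mathbb{R}[u,v]$; in particular the common weight already forces $f$ to be homogeneous. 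It then remains to show each such eigenvector is a product of the basic polynomials, which I would obtain by factoring $G_j$ into irreducibles and invoking the factorization lemma (cofactors add), verifying that the only irreducible Darboux factors compatible with the weight constraint and with membership in $\mathbb{R}[u,v]$ are $x^2-y^2$, $x^2+z^2$ and $y^2+z^2$ under their respective parameter restrictions.

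The main obstacle is precisely this last completeness step. The degree-two solution and its three cofactor values are immediate, but excluding ``new'' higher-degree irreducible Darboux polynomials requires the weight bookkeeping on $\mathbb{R}[x^2,y^2,z^2]$ to interact correctly with the constraint $G_j\in\mathbb{R}[u,v]$, and demands extra care on the degenerate locus $a=b=1$, where the eigenspaces are largest and the admissible combinations most numerous. I expect this to become routine once the diagonal action is written out explicitly, but it is the step where a careless argument could overlook solutions.
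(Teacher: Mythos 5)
Your proposal follows, in its essentials, the same route as the paper. After the reduction to constant cofactors, the form lemma, and Lemma \ref{eachterm}, the paper likewise restricts the search to the linear combinations $m(x^2-y^2)+n(x^2+z^2)$, derives exactly the linear system you wrote down (substituting into equation (\ref{eqn2solve2}) with $j=2$, $f_{j-1}=0$), and reads off the same three solution families $\{n=0,\ a=b,\ c_0=2b\}$, $\{m=0,\ a=1,\ c_0=2\}$, $\{m=-n,\ b=1,\ c_0=2\}$. Your repackaging of this computation as an eigenvector problem for the diagonal action of the linear part on $\mathrm{span}\{x^2,y^2,z^2\}$ is a pleasant reformulation, but mathematically identical, including the treatment of the degenerate locus $a=b=1$. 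The one place you go beyond the paper is the final generation clause: the paper treats it as essentially immediate from Lemma \ref{eachterm} (every constant-cofactor Darboux polynomial lies in $\mathbb{R}[u,v]$, hence is built from $u$ and $v$ by sums and products, so only linear combinations need to be tested), whereas you attempt an actual completeness proof via weights and irreducible factorization.

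Two concrete problems arise in that added sketch. First, your claim that a common weight forces $f$ to be homogeneous is false on the degenerate locus: for $a=b=0$ every monomial in $x^2,y^2$ has weight $0$, and, for instance, $(x^2-y^2)+(x^2-y^2)^2$ is a non-homogeneous polynomial first integral. This is harmless for the generation statement (sums with equal cofactor are allowed by the paper's definition), but the assertion as stated is wrong. Second, and more seriously for your strategy, irreducible factors in $\mathbb{R}[x,y,z]$ of a polynomial lying in $\mathbb{R}[u,v]$ need not lie in $\mathbb{R}[u,v]$, nor need they have constant cofactors: $x^2-y^2=(x-y)(x+y)$, and for $a=b$ the factors $x-y$ and $x+y$ are themselves Darboux polynomials with the non-constant cofactors $a-z$ and $a+z$. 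So Christopher's lemma takes you outside the class you are working in, and your concluding verification, that ``the only irreducible Darboux factors compatible with the weight constraint and with membership in $\mathbb{R}[u,v]$'' are the three basic polynomials, is not true as written. A correct version would have to factor inside the ring $\mathbb{R}[u,v]$ (or otherwise recombine the real linear factors into even pieces) before invoking the weight bookkeeping. To be fair, the paper's own handling of this completeness step is also quite brief; but the verification you propose would fail at exactly this point if carried out literally.
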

We have thus constructed the complete list of Darboux polynomials for the vector field of interest here.

Following the exact same steps, one may find the complete list of Darboux polynomials for vector field (\ref{d2}), in case the sign in the last equation in positive. We state this result here as well, omitting the proof.
\begin{theorem}
Consider vector field $(ax+yz)\frac{\partial}{\partial x}+(by+xz)\frac{\partial}{\partial y}+(z+xy)\frac{\partial}{\partial z}$. It possesses the following Darboux polynomials.
\begin{itemize}
\item {$K_b(x,y,z)=x^2-z^2$, for $a=1$ and $b\in \mathbb{R}$, having cofactor 2.}
\item {$K_a(x,y,z)=y^2-z^2$, for $b=1$ and $a\in \mathbb{R}$, having cofactor 2.}
\item {$K_{ab}(x,y,z)=x^2-y^2$, for $a=b\in \mathbb{R}$, having cofactor 2a.}
\end{itemize}
All other Darboux polynomials are generated from these three.
\end{theorem}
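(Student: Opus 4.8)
The plan is to replay, for the field $Y=(ax+yz)\partial_x+(by+xz)\partial_y+(z+xy)\partial_z$, the entire argument developed above for $X$; the two fields differ only by the sign of the monomial $xy$ in the third component, so almost every step transfers with at most a change of sign. First I would check that $Y$ is still equivariant under the $D_2$ group generated by the four listed transformations (a direct substitution confirms this, the sign change in $\dot z$ being compensated by the simultaneous sign changes of $x$ and $y$). Consequently the reduction to constant cofactors goes through verbatim: the argument via Lemma 2.2 of \cite{Ferragut-Gasull2015} and the symmetries $(x,y,z)\mapsto(-x,-y,z)$, $(x,y,z)\mapsto(x,-y,-z)$ shows that if $Y$ has a Darboux polynomial at all, it has one with constant cofactor $c_0$. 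Writing $f=\sum_{j=0}^d f_j$ in homogeneous components then yields the same system as (\ref{eqn2solve1})--(\ref{eqn2solve3}), except that the quadratic operator $yz\partial_x+xz\partial_y-xy\partial_z$ is everywhere replaced by $yz\partial_x+xz\partial_y+xy\partial_z$.

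Next I would solve the top-degree equation, the analogue of (\ref{eqn2solve1}), by characteristics. The characteristic system is now $\dot x=yz,\ \dot y=xz,\ \dot z=xy$, for which $x\dot x=y\dot y=z\dot z=xyz$ immediately gives the two first integrals $u=x^2-y^2$ and $v=x^2-z^2$ (the sign of the $z^2$ term flips relative to the field $X$). Hence every solution of the top equation is $f_d=G(x^2-y^2,x^2-z^2)$ with $G$ homogeneous of degree $d/2$, and in particular $d$ is even.

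I would then rule out terms of degree $d-1$ exactly as before. Using the change of variables $u=x^2-y^2,\ v=x^2-z^2,\ w=z$ (whose inverse now reads $x=\pm\sqrt{v+w^2},\ y=\pm\sqrt{v+w^2-u},\ z=w$), the degree-$(d-1)$ equation reduces, at fixed $u,v$, to an ordinary differential equation in $w$; integrating the term $c_0G(u,v)$ against $1/\sqrt{(v+w^2)(v+w^2-u)}$ produces an elliptic integral of the first kind, which the elementary contributions coming from the linear part cannot cancel, so $f_{d-1}$ cannot be a polynomial unless it vanishes identically. Descending one step, the degree-$(d-2)$ equation reduces to the same homogeneous equation as the top one, giving $f_{d-2}=W(x^2-y^2,x^2-z^2)$, and iterating yields $f=\sum_j G_j(x^2-y^2,x^2-z^2)$ with $G_j$ homogeneous of degree $2j$. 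The degree-counting argument of Lemma \ref{eachterm} applies unchanged (the linear part preserves degree while the quadratic part raises it by one), so each $G_j$ is itself a Darboux polynomial with cofactor $c_0$. I expect this elliptic-integral step to be the only genuinely delicate point, everything else being bookkeeping.

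Finally, it remains to identify the generators among the linear combinations $f_2=m(x^2-y^2)+n(x^2-z^2)$. Substituting $f_2$ (with $f_1=0$) into the degree-two eigenvalue equation $ax\partial_x f_2+by\partial_y f_2+z\partial_z f_2=c_0 f_2$ and matching the coefficients of $x^2,y^2,z^2$ produces the linear system
\[
\begin{cases}
2a(m+n)=c_0(m+n)\\
2bm=c_0 m\\
2n=c_0 n
\end{cases}.
\]
Solving it case by case gives exactly three families: $n=0$ with $a=b$ and $c_0=2a$, yielding $K_{ab}=x^2-y^2$; $m=0$ with $a=1$ and $c_0=2$, yielding $K_b=x^2-z^2$; and $m=-n$ with $b=1$ and $c_0=2$, yielding $K_a=y^2-z^2$ (since then $f_2$ is proportional to $z^2-y^2$). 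The remaining possibility $m,n,m+n\neq 0$ forces $a=b=1$ and $c_0=2$, in which case all three generators coexist and $f_2$ is merely their sum, so no new basic polynomial arises. By the factorization lemma of \cite{Christopher} together with Lemma \ref{eachterm}, every Darboux polynomial of $Y$ is built from these three by addition within a fixed cofactor and by multiplication, which is the assertion of the theorem.
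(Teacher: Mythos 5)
Your proposal is correct and follows essentially the same route as the paper: the paper omits the proof of this theorem entirely, stating only that one should follow ``the exact same steps'' as in the negative-sign case, and your argument is precisely that replay, with the correctly sign-adjusted first integrals $x^2-y^2$, $x^2-z^2$ of the characteristic system and the resulting linear system for $m,n,c_0$, whose case analysis matches the three stated generators. If anything, you are slightly more careful than the paper at the elliptic-integral step, since you explicitly note that the elementary contributions from the linear part cannot cancel the elliptic integral of the first kind.
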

Having constructed the complete list of Darboux polynomials for the vector field of interest, we are now going to use these polynomials to comment on the global behaviour of this field.
\section{Global phase portraits}
\label{portraits}
In this section, we study the global behavior of the vector field $X$, using the Darboux polynomials constructed above.
\subsection{The $a=1,\ b\in \mathbb{R}$ ($b=1,\ a\in \mathbb{R}$) case}
We begin by considering the $a=1$ case. It turns out that the qualtative behaviour of the vector field is quite simple. We describe it in the next:
\begin{proposition}
Consider vector field $X$, with $a=1$. Then:
\begin{itemize}
\item {For $b>0$ there exists a unique fixed point, located at the origin. This fixed point is globally repelling}.
\item {For $b=0$ the whole $y$--axis consists of fixed points. All other orbits of the field tend to one of these fixed points, for $t\rightarrow-\infty$, and to infinity, for $t\rightarrow +\infty$.}
\item {For $b<0$ there exists a unique fixed point, located at the origin. This fixed point is hyperbolic, having a one--dimensional stable manifold, that is, the $y$--axis. All other orbits tend to infinity, for $t\rightarrow +\infty$.}
\end{itemize}
\end{proposition}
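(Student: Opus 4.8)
The whole analysis rests on the Darboux polynomial $H_b=x^2+z^2$, which for $a=1$ has cofactor $2$. Since $\mathcal{L}_X H_b=2H_b$, along any orbit one has $(x^2+z^2)(t)=(x_0^2+z_0^2)e^{2t}$, so the pair $(x,z)$ is completely controlled: $x^2+z^2$ decays to $0$ as $t\to-\infty$ and blows up as $t\to+\infty$, unless the orbit starts on the invariant $y$-axis $\{x=z=0\}$, on which the field reduces to $\dot y=by$. These two facts---exponential control of $x^2+z^2$ and the explicit flow on the $y$-axis---are the backbone of the argument, and they make the proof essentially a corollary of Section \ref{darboux}.

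First I would settle the equilibria. Setting the right-hand sides to zero, the third equation forces $z(1+y^2)=0$, hence $z=0$; then $x=0$ from the first equation, and finally $by=0$ from the second. Thus for $b\neq 0$ the origin is the unique fixed point, while for $b=0$ the entire $y$-axis is fixed, matching the trichotomy. The linearization at the origin is diagonal with eigenvalues $1,b,1$, so the origin is a source for $b>0$ and a hyperbolic saddle with exactly one negative eigenvalue for $b<0$.

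The forward-time statements are then immediate from $H_b$. Off the $y$-axis, $x^2+z^2\to\infty$, so every such orbit escapes to infinity as $t\to+\infty$, in all three cases. On the $y$-axis, $y(t)=y_0e^{bt}$, which tends to infinity for $b>0$, stays fixed for $b=0$, and tends to $0$ for $b<0$. In particular, for $b<0$ a forward orbit converges to the origin if and only if it lies on the $y$-axis, which identifies the one-dimensional stable manifold as exactly the $y$-axis and shows all other orbits run off to infinity.

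The delicate part, and the main obstacle, is the backward-time behaviour for $b\ge 0$, where one must show not merely that $x,z\to 0$ but that the $y$-coordinate is also controlled. Here I would treat $xz$ as a forcing term and apply variation of parameters, $y(t)=e^{bt}\bigl(y_0+\int_0^t e^{-bs}x(s)z(s)\,ds\bigr)$, together with the bound $|x(s)z(s)|\le\tfrac12(x_0^2+z_0^2)e^{2s}$ coming from $H_b$. For $b>0$ this gives $|y(t)|\le e^{bt}|y_0|+\tfrac{x_0^2+z_0^2}{2|2-b|}\bigl|e^{bt}-e^{2t}\bigr|\to 0$ as $t\to-\infty$ (with the obvious modification when $b=2$), so the origin is the $\alpha$-limit of every orbit and is therefore globally repelling. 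For $b=0$ the same estimate shows $\int_{-\infty}^0 x(s)z(s)\,ds$ converges absolutely, whence $y(t)$ approaches a finite limit $y^\ast$ and the orbit tends to the fixed point $(0,y^\ast,0)$, closing that case; since the orbit stays bounded as $t\to-\infty$ it is defined for all negative time, so these limits are legitimate. Finally, the $b=1$, $a\in\mathbb{R}$ statement of the heading follows verbatim from the involution $(x,a)\leftrightarrow(y,b)$, which leaves $X$ invariant and carries $H_b$ to $H_a=y^2+z^2$.
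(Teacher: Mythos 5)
Your proposal is correct and follows exactly the route the paper intends: the paper omits the proof of this proposition, remarking only that it ``is based on the fact that, for these parameter values, vector field $X$ possesses $H_b$ as a Darboux polynomial,'' and your argument is built precisely on the resulting identity $(x^2+z^2)(t)=(x_0^2+z_0^2)e^{2t}$ together with the invariant $y$--axis. In fact your write-up supplies the details the paper leaves out---in particular the variation-of-parameters estimate controlling the $y$--coordinate in backward time for $b\ge 0$, which is the only delicate step---and these details are sound.
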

We omit the proof, since it is based on the fact that, for these parameter values, vector field $X$ possesses $H_b$ as a Darboux polynomial.
\begin{remark}
Note that in \cite{Anastassiou}, and specifically in the proof of Proposition ($1$), polynomial $H_b$ is used as a Darboux polynomial for $b=0$ and every positive value of $a$, although, as we have seen above, it is a Darboux polynomial only for $a=1$. However, the conclusions of that proposition still hold, since one may use the same function to prove that the distance of every regular orbit from the $y$--axis becomes infinite, as $t\rightarrow +\infty$.
\end{remark}
Using the existence of $H_a$, for $b=1$, one may prove the following: 
\begin{proposition}
Consider vector field $X$, with $b=1$. Then:
\begin{itemize}
\item {For $a>0$ there exists a unique fixed point, located at the origin. This fixed point is globally repelling}.
\item {For $a=0$ the whole $x$--axis consists of fixed points. All other orbits tend to one of these fixed points, for $t\rightarrow-\infty$, and to infinity, for $t\rightarrow +\infty$.}
\item {For $a<0$ there exists a unique fixed point, located at the origin. This fixed point is hyperbolic, having a one--dimensional stable manifold, that is, the $x$--axis. All other orbits tend to infinity, for $t\rightarrow +\infty$.}
\end{itemize}
\end{proposition}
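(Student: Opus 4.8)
The plan is to run the argument in parallel with the (omitted) proof of the previous proposition, the only change being that the controlling Darboux polynomial is now $H_a=y^2+z^2$ rather than $H_b=x^2+z^2$, so that the distinguished invariant line becomes the $x$-axis instead of the $y$-axis. In fact the cleanest route is to observe that the linear involution $(x,y,z)\mapsto(y,x,z)$ conjugates the field with parameters $(a,b)$ to the field with parameters $(b,a)$; hence the present statement is the image of the previous one under this involution, which interchanges the roles of the $x$- and $y$-axes. Still, I would give the direct argument below, since it isolates exactly where $H_a$ does, and does not, suffice.

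First I would settle the fixed points algebraically. Setting $y+xz=0$ and $z-xy=0$ forces $z(1+x^2)=0$, hence $z=0$ and then $y=0$; the remaining equation $ax=0$ then gives $x=0$ when $a\neq0$ and leaves $x$ free when $a=0$. This yields the unique fixed point at the origin for $a\neq0$ and the whole $x$-axis of fixed points for $a=0$. Next I would record the Darboux identity: since $\mathcal{L}_X H_a=2H_a$ (a direct computation, with the $xyz$ terms cancelling), every orbit satisfies $y(t)^2+z(t)^2=(y_0^2+z_0^2)e^{2t}$. This immediately makes the $x$-axis $\{y=z=0\}$ invariant and shows that any orbit off it has $y^2+z^2\to+\infty$ as $t\to+\infty$ (so it tends to infinity) and $y^2+z^2\to0$ as $t\to-\infty$ (so it approaches the $x$-axis).

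It remains to pin down the behaviour along the $x$-axis and the transition between the three cases, for which I would analyse $\dot x=ax+yz$ together with the bound $|yz|\le\tfrac12(y^2+z^2)=\tfrac12\rho_0^2 e^{2t}$ coming from the Darboux identity, where $\rho_0^2=y_0^2+z_0^2$. On the axis itself $\dot x=ax$, giving a source for $a>0$, a line of equilibria for $a=0$, and a sink along the axis for $a<0$. For $a<0$ the linearisation at the origin is $\mathrm{diag}(a,1,1)$ with eigenvalues $a<0,1,1$, so the stable manifold is one-dimensional and tangent to the $x$-axis; since the $x$-axis is invariant and forward-asymptotic to the origin while every off-axis orbit escapes to infinity, the stable manifold is exactly the $x$-axis. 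For $a=0$, backward in time the estimate makes $\dot x=yz$ integrable on $(-\infty,0]$, so $x(t)$ converges to a finite limit $x_{-\infty}$ and the orbit tends to the equilibrium $(x_{-\infty},0,0)$. For $a>0$ the eigenvalues $a,1,1$ are positive, giving a local repeller; to upgrade this to ``globally repelling'' I would use the variation-of-constants formula $x(t)=e^{at}x_0+\int_0^t e^{a(t-s)}y(s)z(s)\,ds$ together with the same bound to show $x(t)\to0$ as $t\to-\infty$, so that the origin is the $\alpha$-limit of every orbit.

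The main obstacle is precisely this last kind of global step: the Darboux polynomial $H_a$ controls the transverse coordinates $(y,z)$ completely but says nothing directly about $x$. Thus the genuine work is the integral estimate for the $x$-coordinate, namely integrability of $yz$ backward in time for $a=0$ and the variation-of-constants bound forcing $x\to0$ as $t\to-\infty$ for $a>0$, whereas the forward escape to infinity, the fixed-point count, and the identification of the stable manifold for $a<0$ are immediate consequences of $y^2+z^2=\rho_0^2 e^{2t}$.
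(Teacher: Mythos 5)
Your proposal is correct and follows exactly the route the paper indicates: the paper omits the proof, saying only that it rests on the existence of the Darboux polynomial $H_a=y^2+z^2$ for $b=1$, and your argument is precisely that, with the details the paper leaves out (the identity $y^2+z^2=\rho_0^2e^{2t}$, the fixed-point computation, and the variation-of-constants estimate controlling $x(t)$ as $t\to-\infty$) filled in correctly. Your additional observation that the involution $(x,y,z)\mapsto(y,x,z)$ conjugates the field with parameters $(a,b)$ to the one with $(b,a)$, so that this proposition is the mirror image of the preceding one, is a valid and slicker alternative the paper does not mention.
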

\subsection{The $a=b$ case}
It turns out that the $a=b$ case is more complicated than the cases studied above. Once again, we have to distinguish between cases.
\begin{proposition}
Consider vector field $X$ for $a=b>0$. There exists a unique fixed point, located at the origin, which is globally repelling. 
\end{proposition}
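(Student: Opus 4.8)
The plan is to locate the equilibria directly and then upgrade the local source behaviour to a global statement by means of a Lyapunov function assembled from the field's Darboux polynomials.

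First I would determine the fixed points by setting $X=0$. The third component forces $z=xy$, and substituting this into the first two components gives $x(a+y^2)=0$ and $y(a+x^2)=0$. Since $a=b>0$, the factors $a+y^2$ and $a+x^2$ are strictly positive, so necessarily $x=y=0$ and hence $z=xy=0$. Thus the origin is the unique equilibrium. Its linear part is $\mathrm{diag}(a,a,1)$, whose eigenvalues are all positive, so the origin is a local source; the real work lies in promoting this to the claimed global behaviour.

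For the global picture I would introduce the function $V(x,y,z)=x^2+y^2+2z^2$, which is precisely the sum $H_b+H_a$ of the two squared-distance-to-axis polynomials $x^2+z^2$ and $y^2+z^2$ that the previous section identified as Darboux polynomials (cofactor $2$) in the special cases $a=1$ and $b=1$. Differentiating along $X$ gives
\[
\dot V = 2a(x^2+y^2)+4z^2,
\]
the decisive point being that the indefinite cross-terms $2xyz$ generated separately by $x^2+y^2$ and by $z^2$ cancel exactly thanks to the coefficient $2$ weighting $z^2$. For $a>0$ this right-hand side is positive definite, so $V$ strictly increases along every nontrivial orbit.

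Finally I would convert this into a quantitative escape estimate. Setting $\lambda=\min\{2a,2\}>0$, one verifies $\dot V\ge\lambda V$, so that $V(t)\ge V(0)e^{\lambda t}$; hence $V\to+\infty$ and every orbit other than the equilibrium leaves every compact set in forward time. Integrating the same inequality backwards yields $V(t)\le V(0)e^{\lambda t}\to 0$ as $t\to-\infty$, so that each orbit converges to the origin in backward time. Together these establish that the origin is globally repelling. The main obstacle is the construction in the third step: recognising the correct weighting of $z^2$ that annihilates the $xyz$ cross-terms and makes $\dot V$ positive definite, equivalently that the sum of the two axis-distance Darboux polynomials is the right Lyapunov function. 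Once that combination is in hand, the remaining estimates are entirely routine.
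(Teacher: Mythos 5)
Your proof is correct, but it takes a genuinely different route from the paper's. The paper stays inside its Darboux framework for the parameter line $a=b$: since $H_{ab}=x^2-y^2$ has cofactor $2a>0$, the planes $x=\pm y$ are invariant and $x^2-y^2$ evolves as $(x^2-y^2)(0)e^{2at}$, so every orbit off these planes escapes to infinity; the field is then restricted to each invariant plane, where the two-dimensional Lyapunov function $x^2+z^2$ (whose derivative along the restricted flow is $2ax^2+2z^2$) shows the origin is a global repeller of the planar flow. You instead work globally on $\mathbb{R}^3$ with the single function $V=x^2+y^2+2z^2$, exploit the exact cancellation of the $xyz$ cross-terms, and obtain the differential inequality $\dot V\ge \lambda V$ with $\lambda=\min\{2a,2\}$. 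Your route buys two things. First, it settles the ``globally repelling'' claim more completely: it gives backward convergence of \emph{every} nontrivial orbit to the origin in one stroke, whereas the paper's exponential-growth argument for orbits off the planes only controls $x^2-y^2$, which backward in time shows approach to the cone $x=\pm y$ rather than to the origin (backward completeness in your argument is immediate, since the inequality traps backward orbits in the compact sublevel set $\{V\le V(0)\}$). Second, the cross-term cancellation in $\dot V=2ax^2+2by^2+4z^2$ does not use $a=b$ at all, so the same function proves global repulsion for all $a,b>0$, a strictly more general statement. What the paper's approach buys is coherence with its program---deducing phase portraits from the Darboux polynomials that actually exist at the given parameter values (your $V=H_a+H_b$ is built from polynomials that are Darboux polynomials only when $a=1$ or $b=1$, though of course nothing stops one from using it as a plain Lyapunov function)---and it makes explicit the invariant planes $x=\pm y$, which are the organizing structure for the heteroclinic connections in the subsequent $a=b<0$ case.
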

\begin{proof}
Since $H_{a,b}$ is a Darboux polynomial having cofactor $2a>0$, surfaces $x^2-y^2=0\Rightarrow x=\pm y$ remain invariant under the flow of vector field $X$, while all other orbits escape to infinity.

The restriction of the vector field $X$ on the $x=y$ surface reads as $(ax+xz)\frac{\partial}{\partial x}+(z-x^2)\frac{\partial}{\partial z}$. It possesses a unique fixed point, located at the origin, which is globally repelling, since $V(x,z)=x^2+z^2$ serves as a Lyapunov function.

The restriction of the vector field $X$ on the $x=-y$ surface reads as $(ax-xz)\frac{\partial}{\partial x}+(z+x^2)\frac{\partial}{\partial z}$, which has the same behaviour as before. 
\end{proof}
\begin{proposition}
Consider vector field $X$ for $a=b=0$. Both the $x$ and the $y$ axes consist of fixed points. All other orbits tend to infinity, for $t\rightarrow +\infty$.  
\end{proposition}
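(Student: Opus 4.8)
The plan is to work with the vector field obtained by setting $a=b=0$ in (\ref{d2}), namely $\dot x = yz$, $\dot y = xz$, $\dot z = z-xy$. First I would settle the equilibria directly: the conditions $yz=0$, $xz=0$, $z-xy=0$ force $z=0$ (if $z\neq 0$ the first two give $x=y=0$ and then $z=xy=0$, a contradiction), and with $z=0$ the third reads $xy=0$. Hence the fixed--point set is exactly $\{z=0,\ xy=0\}$, i.e. the union of the $x$--axis and the $y$--axis, which gives the first assertion.

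For the remaining orbits, the key observation is that the mechanism used in the $a=b>0$ case degenerates here: there $H_{ab}=x^2-y^2$ has positive cofactor $2a$ and grows like $e^{2at}$ off the invariant cone $x=\pm y$, forcing escape, whereas for $a=b=0$ the cofactor is $0$ and $H_{ab}$ becomes a first integral, so it no longer detects escape. I would instead use the two auxiliary functions $P=x^2+z^2$ and $Q=y^2+z^2$ (the ``would--be'' Darboux polynomials $H_b,H_a$, here read as Lyapunov functions). A direct computation gives $\mathcal{L}_X P = 2z^2$ and $\mathcal{L}_X Q = 2z^2$, so $P$, $Q$, and hence $\Psi=P+Q=x^2+y^2+2z^2$, are nondecreasing along the flow, with $\dot\Psi=4z^2$ vanishing only on $\{z=0\}$.

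Now suppose, for contradiction, that some orbit which is not a fixed point stays bounded as $t\to+\infty$. Then $\Psi$ is bounded and monotone, so $\int_0^{\infty}z^2\,dt<\infty$ and, the orbit having bounded velocity, $z(t)\to 0$; by the LaSalle invariance principle its $\omega$--limit set lies in the largest invariant subset of $\{z=0\}$. Since invariance forces $\dot z=-xy=0$ there, this subset is precisely $\{z=0,\ xy=0\}$, i.e. the two axes. Pick a point $q$ of the $\omega$--limit set. If $q$ lies on the $x$--axis, then along some sequence $t_n\to\infty$ one has $Q(t_n)=y(t_n)^2+z(t_n)^2\to 0$; but $Q$ is nonnegative and nondecreasing, so $Q\equiv 0$, forcing $y\equiv z\equiv 0$, i.e. the orbit lies on the $x$--axis and is a fixed point, a contradiction. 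The case $q$ on the $y$--axis is identical, using $P$ instead.

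Hence every non--equilibrium orbit is unbounded as $t\to+\infty$; since $\Psi$ is nondecreasing this yields $\Psi\to+\infty$, and as $\Psi\ge x^2+y^2+z^2$ the orbit tends to infinity. I expect the genuinely delicate point to be this last non--convergence step: the equilibria form a non--hyperbolic continuum (an entire line along each axis), so LaSalle alone only confines the limit set to the axes and cannot by itself prevent an orbit from spiralling into a fixed point. The monotonicity of $Q$ (resp. $P$), which collapses any orbit accumulating on an axis onto that axis itself, is exactly what closes the gap, sparing us the linearization/center--manifold analysis one would otherwise have to carry out at each non--hyperbolic equilibrium.
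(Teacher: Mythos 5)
Your proof is correct, and it takes a genuinely different route from the paper's. The paper's proof is a one-line reduction: for $a=b=0$ the polynomial $H_{ab}=x^2-y^2$ has cofactor $2a=0$, hence is a first integral, and the global behaviour is meant to be read off from the flow restricted to the invariant cylinders $x^2-y^2=c$. You never use this first integral. Instead you take $P=x^2+z^2$ and $Q=y^2+z^2$ (the paper's $H_b$ and $H_a$, which for $a=b=0$ are \emph{not} Darboux polynomials), compute $\mathcal{L}_XP=\mathcal{L}_XQ=2z^2\ge 0$, and run a LaSalle-type argument: a bounded non-equilibrium forward orbit would have its $\omega$-limit set inside the largest invariant subset of $\{z=0\}$, i.e.\ the two axes, and then the monotonicity and nonnegativity of $Q$ (resp.\ $P$) collapse the whole orbit onto the axis it accumulates on, making it a fixed point --- a contradiction; monotonicity of $\Psi=P+Q$ then upgrades unboundedness to genuine escape to infinity. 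This is close in spirit to the paper's own Remark 3.2, which observes that $H_b$ can serve as a Lyapunov-type function even at parameter values where it fails to be a Darboux polynomial. As for what each approach buys: the paper's argument stays within its Darboux framework and foliates phase space by invariant surfaces, but as printed it is only a sketch, with the two-dimensional analysis on each level set left to the reader; your argument is self-contained, supplies exactly the details the paper omits, and correctly isolates the delicate point --- since the equilibria form non-hyperbolic continua, confinement of the $\omega$-limit set to the axes would not by itself preclude convergence to a fixed point, and it is the monotonicity of $P$ and $Q$ that rules this out without any linearization or center-manifold analysis.
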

\begin{proof}
The proof relies on the fact that, for $a=b=0$, $H_{ab}$ serves as first integral of the system, since it is a Darboux polynomial with zero cofactor.
\end{proof}
The most interesting case is the $a=b<0$, where we have the following:
\begin{proposition}
For $a=b<0$ vector field $X$ possesses five fixed points, namely the origin and: 
\begin{center}
$(\sqrt{-a},\sqrt{-a},a),\ (-\sqrt{-a},-\sqrt{-a},a),$\\ 
$(-\sqrt{-a},\sqrt{-a},a),\ (\sqrt{-a},-\sqrt{-a},a)$,
\end{center}
all of which are hyperbolic saddles. There exist four heteroclinic orbits, connecting each one of the non--trivial equilibria with the origin, contained in the invariant planes $x=\pm y$. All other orbits tend to these two planes for $t\rightarrow +\infty$.
\end{proposition}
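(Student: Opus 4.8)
The plan is to exploit the Darboux polynomial $H_{ab}=x^2-y^2$ together with the two planar subsystems it produces. First I would locate the equilibria directly: writing $z=xy$ from the third equation and forming the sum and difference of the first two gives $(x+y)(a+z)=0$ and $(x-y)(a-z)=0$, and a short case analysis (using $a<0$, so $-a>0$) yields exactly the origin together with four further points, lying in pairs on the planes $x=y$ and $x=-y$. Next, since $\mathcal{L}_X(x-y)=(a-z)(x-y)$ and $\mathcal{L}_X(x+y)=(a+z)(x+y)$, the factors $x\mp y$ are themselves Darboux polynomials, so the planes $\{x=y\}$ and $\{x=-y\}$ are invariant (this also re-derives $H_{ab}$, whose cofactor is their sum $2a$). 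The final assertion of the proposition then follows at once: along any orbit $\frac{d}{dt}(x^2-y^2)=2a\,(x^2-y^2)$, whence $x^2-y^2=(x_0^2-y_0^2)e^{2at}\to 0$ as $t\to+\infty$ because $a<0$, so every orbit approaches the union $\{x=y\}\cup\{x=-y\}$.

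For the equilibrium types I would split the linearization into its in-plane and transverse parts. At a point of $\{x=y\}$ the transverse rate is the cofactor $a-z$ of $x-y$, which equals $2a<0$ at the non-trivial equilibria; the in-plane part is governed by the restricted system $\dot x=x(a+z)$, $\dot z=z-x^2$, whose Jacobian at the non-trivial equilibrium has trace $1$ and determinant $-2a>0$, i.e.\ two eigenvalues with positive real part. Hence each such point is a hyperbolic saddle with a two-dimensional unstable and a one-dimensional stable manifold. The plane $\{x=-y\}$ is handled identically, noting that the substitution $z\mapsto-z$ conjugates its restricted field $\dot x=x(a-z)$, $\dot z=z+x^2$ to the one above, so the same conclusion holds there; the origin, with eigenvalues $a,a,1$, is likewise a hyperbolic saddle.

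It remains to produce the four heteroclinic orbits, and this is where the real work lies. By the conjugacy just noted it suffices to treat the planar system $\dot x=x(a+z)$, $\dot z=z-x^2$, in which the origin is a saddle (stable direction along the $x$-axis, unstable along the invariant $z$-axis) and the two non-trivial points are repellers. I would show that each branch of the one-dimensional stable manifold of the origin has the corresponding repeller as its $\alpha$-limit set, giving one connection per equilibrium. To make this rigorous I would (i) rule out periodic orbits using the Dulac function $B=1/x$, for which $\partial_x(BP)+\partial_z(BQ)=1/x$ keeps a constant sign on each of the half-planes $x>0$ and $x<0$, while the invariant line $x=0$ prevents an orbit from crossing between them; and (ii) construct a bounded region, built from arcs of the nullclines $z=-a$, $z=x^2$ and $x=0$, that is invariant under the reversed flow and contains a single repeller, so that Poincar\'e--Bendixson forces the backward continuation of the stable manifold to converge to it. The main obstacle is precisely this global control in the plane: confirming that the stable manifold of the origin does not escape to infinity but is trapped by the repeller, and doing so uniformly in $a<0$. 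Once the two connections in $\{x=y\}$ are established, the reflection $x\mapsto-x$ (a symmetry of each planar field) and the $z\mapsto-z$ conjugacy transport them to the remaining equilibria, yielding the four heteroclinic orbits, all lying in the invariant planes; the global attraction proved in the first step shows that no further recurrent behaviour survives off the planes.
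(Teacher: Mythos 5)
The paper itself records no proof of this proposition (it is stated bare, with the dynamics deferred in effect to the reference by Anastassiou et al.), so your attempt has to be judged on its own merits rather than against an argument in the text. Most of it is correct and goes beyond what the paper writes down: the factorization of the fixed-point equations into $(x+y)(a+z)=0$, $(x-y)(a-z)=0$ (which, note, shows the pair of equilibria on $\{x=y\}$ has third coordinate $-a$, not $a$ as misprinted in the proposition); the observation that $x\pm y$ are themselves Darboux factors with cofactors $a\pm z$, giving the invariant planes; the identification of the transverse eigenvalue with the cofactor value $2a$ (indeed $(1,-1,0)$ is an eigenvector of the Jacobian at $(\sqrt{-a},\sqrt{-a},-a)$ with eigenvalue $2a$); the in-plane data (trace $1$, determinant $-2a$), the eigenvalues $a,a,1$ at the origin; and the global attraction statement, since $x^2-y^2=(x_0^2-y_0^2)e^{2at}\to 0$ and $\min(|x-y|,|x+y|)\le\sqrt{|x^2-y^2|}$ controls the distance to the union of the planes.

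The gap is in the one step that carries the real content: existence of the heteroclinic connections. Your step (ii) cannot be carried out as described. In the closed half-plane $x\ge 0$ the three curves you propose to use meet only at $(0,0)$, $(0,-a)$ and at the repeller $(\sqrt{-a},-a)$ itself; hence the only Jordan curve that can be assembled from arcs of $x=0$, $z=-a$ and $z=x^2$ is the curvilinear triangle with these three points as vertices. That region has the repeller on its boundary rather than in its interior, and it is not invariant under the reversed flow: on its parabolic side one computes, for the reversed field, $\frac{d}{dt}(z-x^2)=(x^2-z)+2x^2(a+z)=2x^2(a+x^2)<0$ for $0<x<\sqrt{-a}$, so orbits exit there. (This is not an accident of your choice of arcs: for $a<-1/8$ the repeller is a focus, and reversed orbits spiral across every nullcline near it.) So the boundedness of the backward continuation of the stable manifold, which you yourself flag as the main obstacle, is genuinely not established. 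The gap can be closed, uniformly in $a<0$, by replacing the trapping region with a Lyapunov function: for the restricted system $\dot x=x(a+z)$, $\dot z=z-x^2$ put $V(x,z)=x^2+2a\ln|x|+(z+a)^2$; then $\dot V=2(a+z)\bigl[(x^2+a)+(z-x^2)\bigr]=2(z+a)^2\ge 0$, so $V$ decreases along the reversed flow, its sublevel sets in each open half-plane $\{\pm x>0\}$ are compact (because $a<0$ makes $x^2+2a\ln|x|\to+\infty$ both as $|x|\to 0$ and as $|x|\to\infty$), and LaSalle's invariance principle forces every reversed orbit, in particular each branch of the stable manifold of the origin, to converge to the largest invariant subset of $\{z=-a\}$, which is exactly the repeller. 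This produces the two connections in $\{x=y\}$ at once, renders your Dulac and Poincar\'e--Bendixson steps unnecessary, and transports to $\{x=-y\}$ by the $z\mapsto -z$ conjugacy you already noted.
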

The $a=b<0$ is thus the most interesting one. In \cite{Anastassiou} it is shown that this case serves as the starting point, in the parameter space of the system, of a path leading to chaotic behaviour, which deserves a more systematic study.  
\section{Conclusions}
The theory of Darboux integrability is a well established one and of particular importance since it provides us with results which enlighten the 
global behaviour of vector fields (see \cite{Dumortier2006} for details). Therefore, a number of systems have been studied from this perspective.

Let us focus here on articles \cite{Swinnerton,Llibre-Zhang}. In these articles the Darboux integrability of two famous systems was studied, namely those due to Lorenz and R\"{o}ssler. These systems present different symmetric properties, since the Lorenz system possesses a two--fold symmetry while the R\"{o}ssler system possesses no symmetry at all. We have here studied the Darboux polynomials of a system with richer symmetries than those of Lorenz.

A natural question arises: can one determine the symmetric properties of a vector field from the algebraic properties of the module generated from the Darboux polynomials of the system? What other global properties can be deduced from the structure of this module? Let us note here that the structure of vector fields having a given set of polynomials as Darboux polynomials has already attracted the attention of researchers (see \cite{Pantazi1,Pantazi2}) and deserves much more attention.

We hope to further comment on these issues in a future publication.
\section*{Acknowledgements}
The authors would like to express their gratitude to the anonymous referee, who spotted a critical gap in an earlier version of our work. His comments led us to correct and significantly simplify our calculations. We would also like to thank the editors for effectively helping us during the preparation of the manuscript.    

\end{document}